\documentclass[12pt]{article}
\usepackage{amsmath,amsthm,amssymb,amsfonts}
\usepackage{mathabx,epsfig}

\usepackage[margin=1in]{geometry}
\usepackage{amsmath,amscd}
\usepackage{mathtools}

\setlength{\textwidth}{6.45in}
\usepackage[all]{xy}

\usepackage{thmtools}
\usepackage{thm-restate}
\usepackage{hyperref}
\usepackage{cleveref}

\newtheorem{theorem}{Theorem}

\newtheorem{corollary}{Corollary}
\newtheorem{lemma}{Lemma}
\newtheorem{proposition}{Proposition}
\usepackage[small,compact]{titlesec}
\usepackage{enumerate}
\usepackage{times}
\makeatletter

\newcommand{\Rmnum}[1]{\expandafter\@slowromancap\romannumeral #1@}
\makeatother

 \newcommand{\ord}{\operatorname{ord}}

 \newcommand{\Aut}{\operatorname{Aut}}

\newcommand{\Gal}{\operatorname{Gal}}

\newcommand{\disc}{\operatorname{disc}}

\usepackage{xcolor}
\usepackage{titlesec}
\titleformat{\section}[block]{\color{black}\Large\bfseries\filcenter}{}{1em}{}
\titleformat{\subsection}[hang]{\bfseries}{}{1em}{}
\setcounter{secnumdepth}{1}

\theoremstyle{remark}
\newtheorem{definition}{Definition}
\newtheorem{remark}{Remark}
\begin{document}
\title{Galois uniformity in quadratic dynamics over $k(t)$}
\author{Wade Hindes\\
Department of Mathematics, Brown University\\
Providence, RI 02912\\
Contact Information: (401)-383-3739, whindes@math.brown.edu}
\date{\today}
\maketitle
\renewcommand{\thefootnote}{}
\footnote{2010 \emph{Mathematics Subject Classification}: Primary 37P55; Secondary 14G05.}
\footnote{\emph{Key words and phrases}: Rational Points on Curves, Arithmetic Dynamics, Galois Theory.}
\begin{abstract}  We prove that the arboreal Galois representation attached to a large class of quadratic polynomials defined over a field of rational functions $k(t)$ in characteristic zero has finite index in the full automorphism group of the associated preimage tree. Moreover, we show that in most cases, the index is bounded independently of the polynomial.  
\end{abstract}
 
\section
\indent \indent\indent  When attempting to understand the Galois behavior of a family of polynomials over a number field, it is often informative to view the coefficients of these polynomials as indeterminates and study the corresponding Galois groups over the field of rational functions. This is helpful since Hilbert's irreducibility theorem implies that outside of a thin set, the Galois groups of the specialized polynomials are the same as those in the indeterminate case. Hence, by studying Galois groups over function fields, one has a handle on the generic situation over a number field. We apply this heuristic to the arboreal representations in quadratic dynamics.

Let us fix some notation. Let $k$ be a field of characteristic zero, let $K=K_0:=k(t)$ be the field of rational functions in one variable over $k$, and let $R:=k[t]$ be the polynomial ring. For $a\in K$, define $h(a):=deg(a)$ to be the logarithmic height of a rational function. We use the language of heights since we expect our statements, such as those in Theorem \ref{FIC}, to hold over number fields. 

To define the relevant dynamical Galois groups, let $\phi\in R[x]$ be a monic quadratic polynomial, and let $\phi^n$ be the $n^{th}$ iterate of $\phi$. We assume that $\phi^n$ is a separable polynomial for all $n\geq1$, so that the set $T_n(\phi)$ of roots of $\phi, \phi^2,\dots ,\phi^n$ together with $0$, carries a natural binary rooted tree structure: $\alpha,\beta\in T_n(\phi)$ share an edge if and only if $\phi(\alpha) =\beta$ or $\phi(\beta)=\alpha$. Furthermore, let $K_n:=K(T_n(\phi))$ and $G_n(\phi):=\Gal(K_n/K)$. Finally, we set
\begin{equation}{\label{Arboreal}}
T_\infty(\phi):=\bigcup _{n \geq 0} T_n(\phi)\;\;\text{and}\;\; G_\infty(\phi)=\varprojlim G_n(\phi).
\end{equation}  
Since $\phi$ is a polynomial with coefficients in $K$, it follows that $G_n(\phi)$ acts via graph automorphisms on $T_n(\phi)$. Hence, we have injections $G_n(\phi) \hookrightarrow \Aut(T_n(\phi))$ and $G_\infty(\phi) \hookrightarrow \Aut(T_\infty(\phi))$ called the \emph{arboreal representations} associated to $\phi$.   
 
A major problem in dynamical Galois theory, especially over global fields, is to understand the size of $G_\infty(\phi)$ in $\Aut(T_\infty(\phi))$. We prove that if $K$ is a field of rational functions in characteristic zero, then $G_\infty(\phi)$ is a finite index subgroup for many choices of $\phi$, including the family of quadratic polynomials $\phi_f(x)=x^2+f(t)$ for non-constant $f$; see Corollary \ref{uniform bound} below. Moreover, we show that in the generic case, the index bound does not depend on the polynomial.  

To do this, we relate the size of the relative extensions $K_n/K_{n-1}$ to the rational points on some curves defined by iteration. From there we use height bounds for rational points on Thue equations \cite{Schmidt} over function fields to bound $n$. The key assumption, which allows us to parametrize Galois behavior in terms of rational points, is that $\phi$ have the following stability condition.  
\begin{definition} We say that $\phi$ is \emph{stable} if all iterates of $\phi$ are irreducible polynomials. 
\end{definition} 
This is  a mild assumption which can be checked effectively; see Proposition \ref{stability} below. In addition to stability, we have the following definitions which also prove decisive when studying the Galois theory of iterates. 
\begin{definition}Let $\phi(x)=(x-\gamma)^2+c\in R[x]$. Then we set $h(\phi):=\max\{h(\gamma), h(c)\}$ and call $h(\phi)$ the height of $\phi$.    
\end{definition} 
\begin{definition} We say that $\phi=(x-\gamma)^2+c\in R[x]$ is \emph{post-critically finite} if the critical orbit $\mathcal{O}_\phi(\gamma):=\{\phi(\gamma),\phi^2(\gamma),\dots\}$ is a finite set and \emph{post-critically infinite} otherwise.   
\end{definition}  
\begin{definition}We say that $\phi$ is \emph{isotrivial} if there exists $g\in\Aut(\mathbb{A}^1)$ such that $\phi^g:=g\circ\phi\circ g^{-1}$ is defined over $k$ (i.e. $h(\phi^g)=0$).    
\end{definition} 
With the relevant background material and definitions in place, we prove a finite index theorem, with uniformity for a large class of examples, in quadratic dynamics over $K$.     
   
\begin{theorem}[\emph{Finite Index Theorem}]{\label{FIC}} Let $\gamma,c\in R$ and let $\phi(x)=(x-\gamma)^2+c\in R[x]$. If $\phi$ is stable, post-critically infinite and $h(\phi)>0$, then $G_\infty(\phi)$ is a finite index subgroup of $\Aut(T_\infty(\phi))$. In particular, the following bounds hold.  
\begin{enumerate} 
\item If $h(\gamma)\neq h(c)$, then 
\[\log_2\big|\Aut(T_\infty(\phi)):G_{\infty}(\phi)\big|\leq2^{16}-17=65519.\] Specifically, the extensions $K_n/K_{n-1}$ are maximal for all $n\geq17$.
\item If $h(\gamma)=h(c)$ and $h(\gamma-c)\neq0$, then \[\log_2\big\vert\Aut(T_\infty(\phi)):G_{\infty}(\phi)\big\vert<C\Bigg(\frac{h(\gamma)}{h(\gamma-c)}\Bigg).\]  Specifically, the extensions $K_n/K_{n-1}$ are maximal for all\, $n>2\cdot\log_2\Big(78\cdot\frac{h(\gamma)}{h(\gamma-c)}\Big)+9$.
\item If $h(\gamma-c)=0$ \,(i.e. $\phi$ is isotrivial), then 
\[\#\big\{n\,\big\vert \;K_{n}/K_{n-1}\;\text{is not maximal}\big\}\leq h(\gamma)-1.\] 
 \end{enumerate} In particular, for a large class of quadratic polynomials $\phi$ (those in part $1$), the index of $G_\infty(\phi)$ as a subgroup of $\Aut(T_\infty(\phi))$ is bounded independently of $\phi$. 
\end{theorem}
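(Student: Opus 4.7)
The plan is to follow the Jones--Stoll program for arboreal finite-index theorems, transplanted to the function-field setting where effective height bounds on Thue equations are available. The strategy is to show that the relative extensions $K_n/K_{n-1}$ are maximal (of degree $2^{2^{n-1}}$) for all $n$ beyond an explicit threshold $N_0$ depending on the height data of $\phi$; the index bounds in parts (1)--(2) then follow from the elementary observation that each non-maximal level contributes at most $2^{n-1}$ to $\log_2|\Aut(T_\infty(\phi)):G_\infty(\phi)|$.

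The first ingredient is the standard quadratic maximality criterion: for a stable monic quadratic $\phi(x) = (x-\gamma)^2+c$, the extension $K_n/K_{n-1}$ is maximal if and only if $c_n := \phi^n(\gamma)$ is not a square in $K_{n-1}$. Since $K_{n-1}/K$ is a $2$-extension whose Kummer square-class contribution is generated by $c_1,\ldots,c_{n-1}$, failure of maximality at level $n$ forces a nontrivial relation $c_n\cdot\prod_{i\in S}c_i = f^2$ in $K^* = k(t)^*$ for some $S\subseteq\{1,\ldots,n-1\}$. Using the recursion $c_n = (c_{n-1}-\gamma)^2+c$, this becomes the algebraic equation $g\cdot\bigl((x-\gamma)^2+c\bigr) = y^2$ with $g = \prod_{i\in S}c_i$, on which $(x,y) = (c_{n-1},f)$ is a $K$-point. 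Iterating the substitution further transforms it, after a suitable change of variables, into a Thue-like equation over $R = k[t]$.

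The next step is a height balance. Under post-critical infinity and $h(\phi)>0$, the heights $h(c_j)$ grow roughly as $2^{j-1}h(\phi)$ in part (1), and as $2^{j-1}h(\gamma-c)$ in part (2) --- the latter because when $h(\gamma)=h(c)$ the top-degree cancellation in $(c_{n-1}-\gamma)^2$ lowers the effective growth rate. Feeding the Thue equation into Schmidt's effective height bound \cite{Schmidt} produces an upper bound on $h(c_{n-1})$ in terms of $h(\phi)$ and the degree of the form; comparison with the exponential lower bound yields the explicit thresholds $n\geq 17$ in part (1) and $n>2\log_2\bigl(78\,h(\gamma)/h(\gamma-c)\bigr)+9$ in part (2). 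Part (3), the isotrivial case, is qualitatively different: after the change of variable $x\mapsto x-\gamma$ one has $\phi(x) = x^2+(c-\gamma)$ with $c-\gamma\in k$, so every $c_j$ is a constant in $k$ and all relevant Kummer classes are defined over $k$; the bound $\leq h(\gamma)-1$ then comes from a direct divisor-support argument counting possible coincidences modulo $(K^*)^2$ on the original (un-twisted) $c_j$.

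The principal obstacle is the middle step: verifying that the Thue form arising from each Kummer relation is non-degenerate (separable of degree at least three and not a perfect power), uniformly in $n$ and $S$, so that Schmidt's theorem applies. Separability is inherited from the stability of $\phi$, but the leading-term cancellation present in part (2) lowers the effective degree and forces one to carry $h(\gamma-c)$ in place of $h(\phi)$ throughout the estimate. Matching the explicit constants in Schmidt's bound to produce the clean thresholds $17$ and $78$ is the final piece of bookkeeping.
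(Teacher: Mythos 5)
Your skeleton (Stoll's maximality criterion, height bounds forcing an explicit threshold on $n$, then elementary bookkeeping for the index) matches the paper's, but the central technical steps diverge and contain real gaps. The most serious one: even if you could justify passing from ``$\phi^n(\gamma)$ is a square in $K_{n-1}$'' to an equation over $k[t]$, your height comparison will not close as written. If the squarefree part $d_n$ of $\phi^n(\gamma)$ is merely supported on primes dividing $\phi^1(\gamma),\dots,\phi^{n-1}(\gamma)$, then $h(d_n)$ could be as large as $\sum_{j\le n-1} h(\phi^j(\gamma)) \sim 2^{n-1}h(\phi)$, which dominates $h(\phi^{n-1}(\gamma)) \sim 2^{n-2}h(\phi)$, so no bound on $n$ follows. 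The paper's crucial extra observation (\ref{refinement}) is that if $p\mid\phi^{m}(\gamma)$ and $p\mid\phi^n(\gamma)$ then $p\mid\phi^{n-m}(0)$, so the support of $d_n$ lives on iterates of index $\le\lfloor n/2\rfloor$; this cuts the support-side height to $\sim 2^{n/2}h(\phi)$ and is exactly what makes the inequality force $n\le 16$ (resp.\ the part~(2) threshold). Your proposal has no analogue of this step.

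Two further issues. (i) Your claim that non-maximality forces $c_n\cdot\prod_{i\in S}c_i=f^2$ in $K^*$ is not established: it would require knowing the maximal multi-quadratic subextension of $K_{n-1}/K$ is generated by $\sqrt{c_1},\dots,\sqrt{c_{n-1}}$, which depends on the abelianization of the a priori unknown group $G_{n-1}\subseteq\Aut(T_{n-1})$. The paper avoids this by a ramification argument: primes of $d_n$ ramify in $K_{n-1}$, hence divide $\Delta_{n-1}$, and the recursion $\Delta_m=\pm\Delta_{m-1}^2\cdot 2^{2^m}\phi^m(\gamma)$ identifies the support. Relatedly, the paper does not reduce to a Thue form: it produces an integral point on the genus-one curve $Y^2=d_n(X-c)\big((X-\gamma)^2+c\big)$ and applies Mason's elliptic-curve height bound; Schmidt's paper is cited only for a height-comparison lemma. (ii) In part (3), the $\phi^n(\gamma)$ are not constants in $k$ --- they are $c(t)+c_n$ with $c_n\in k$ --- and the bound $h(\gamma)-1$ comes from the fact that non-maximality forces $d_n$ to be a unit (coprimality of the $\phi^n(\gamma)$), hence $\phi^n(\gamma)$ is a constant times a square, hence $\disc_t(c(t)+c_n)=0$; so $c_n$ is a root of a polynomial of degree $h(c)-1$. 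Your ``divisor-support coincidence'' sketch does not reach this.
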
 
After completing this manuscript, it was pointed out to me by Richard Pink how an improvement of the bound given in Theorem \ref{FIC} for isotrivial $\phi$ follows from \cite[Theorem 4.8.1]{Pink}; see Remark \ref{improvement} below.   
\begin{proof} For completeness, we include the following essential lemma of Stoll \cite[Lemma 1.6]{Stoll}, generalized to all monic quadratic polynomials over fields of characteristic not equal to $2$.
 \begin{lemma}{\label{Stoll}} Suppose that $\phi^{n-1}$ is irreducible. Then $|K_n:K_{n-1}|=2^{2^{n-1}}$ and $K_n/K_{n-1}$ is maximal, if and only if $\phi^n(\gamma)$ is not a square in $K_{n-1}$.   
 \begin{proof} We follow the proof sketch in \cite[\S 2.2]{R.Jones} which is generalization of \cite[Lemma 1.6]{Stoll}. Since $\phi$ is a quadratic polynomial, one sees that $K_n/K_{n-1}$ is a Kummer $2$-extension: let $X_{n-1}=\{\beta_1,\beta_2,\dots \beta_{2^{n-1}}\}$ be the roots of $\phi^{n-1}$. Then $K_n=K_{n-1}\big(\sqrt{\delta_1},\sqrt{\delta_2},\dots \sqrt{\delta_{2^{n-1}}}\big)$ for $\delta_i=\disc(\phi(x)-\beta_i)$. By Kummer theory, it follows that     
$D:=|K_n:K_{n-1}|$ is the order of the group generated by the $\delta_i$ in $K_{n-1}^*/(K_{n-1}^*)^2$. Here $(K_{n-1}^*)^2$ denotes the group of non-zero squares in $K_{n-1}$. Moreover, we have that 
\begin{equation}{\label{vector space}}
D=\frac{2^{2^{n-1}}}{\# V},\;\;\text{for}\;\; V:=\Big\{(e_1,e_2,\dots e_{2^{n-1}})\in\mathbb{F}_2^{2^{n-1}}:\; \prod_i\delta_i^{e_i}\in (K_{n-1}^*)^2\Big\}.
\end{equation} 
Note that $V$ is an $\mathbb{F}_2[G_{n-1}(\phi)]$-module. Since $G_{n-1}(\phi)$ is a $2$-group, it follows that $V^{G_{n-1}(\phi)}\neq0$ if and only if $V\neq0$. However, $\delta_i=-4\cdot(c-\beta_i)$ and $G_{n-1}(\phi)$ acts transitively on $X_{n-1}$. Hence $G_{n-1}(\phi)$ acts transitively on the $\delta_i$'s also. Therefore, $K_{n}/K_{n-1}$ is not maximal (i.e. $V\neq0$) if and only if $(1,1,\dots 1)\in V$. Moreover, we have that $(1,1,\dots,1)\in V$ is precisely the statement that
\[\prod_i \delta_i=(-4)^{2^{n-1}}\cdot\prod_i(c-\beta_i)=(-4)^{2^{n-1}}\cdot\phi^{n-1}(c)=(-4)^{2^{n-1}}\cdot \phi^n(\gamma) \in (K_{n-1}^*)^2,\] by definition of $V$ in (\ref{vector space}). This completes the proof of Lemma \ref{Stoll}.             
 \end{proof}     
 \end{lemma}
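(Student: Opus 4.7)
The plan is to realize $K_n/K_{n-1}$ as a Kummer $2$-extension, write its degree in terms of a subspace of relations $V \subset \mathbb{F}_2^{2^{n-1}}$, and apply a fixed-point argument for the $2$-group $G_{n-1}(\phi)$ acting on $V$ to reduce the maximality question to a single squareness condition. First let $\beta_1, \dots, \beta_{2^{n-1}}$ be the roots of $\phi^{n-1}$, which are distinct by our standing separability assumption. Since $\phi$ is a monic quadratic, each root of $\phi^n$ is a solution of some $\phi(x) = \beta_i$, and adjoining such a root amounts to adjoining $\sqrt{\delta_i}$, where $\delta_i = \disc(\phi(x)-\beta_i) = -4(c-\beta_i)$. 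Hence $K_n = K_{n-1}\bigl(\sqrt{\delta_1}, \dots, \sqrt{\delta_{2^{n-1}}}\bigr)$ is an elementary abelian $2$-extension of $K_{n-1}$, and by Kummer theory $[K_n:K_{n-1}]$ equals the order of the subgroup of $K_{n-1}^*/(K_{n-1}^*)^2$ generated by the classes of the $\delta_i$.

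Introduce the relation subspace
\[
V = \bigl\{(e_1, \dots, e_{2^{n-1}}) \in \mathbb{F}_2^{2^{n-1}} : \prod\nolimits_i \delta_i^{e_i} \in (K_{n-1}^*)^2 \bigr\},
\]
so that $[K_n:K_{n-1}] = 2^{2^{n-1}}/\#V$ and the extension is maximal if and only if $V=0$. The key observation is that $V$ carries an $\mathbb{F}_2[G_{n-1}(\phi)]$-module structure: the Galois group permutes the $\beta_i$, and via $\delta_i = -4(c-\beta_i)$ it permutes the $\delta_i$, which gives a coordinate permutation action on $V$.

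Because $G_{n-1}(\phi)$ embeds into $\Aut(T_{n-1}(\phi))$ it is a $2$-group, and the standard modular representation theory fact that every nonzero $\mathbb{F}_2$-representation of a $2$-group has nontrivial invariants forces $V^{G_{n-1}(\phi)} \neq 0$ whenever $V \neq 0$. The irreducibility hypothesis on $\phi^{n-1}$ makes the permutation action on $\{\delta_i\}$ transitive, so the only nonzero vector in $\mathbb{F}_2^{2^{n-1}}$ fixed by this action is the all-ones vector; consequently $V \neq 0$ if and only if the all-ones vector lies in $V$, i.e., if and only if $\prod_i \delta_i$ is a square in $K_{n-1}^*$.

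To finish, using the factorization $\phi^{n-1}(x) = \prod_i (x - \beta_i)$ and the identity $\phi(\gamma) = c$, one computes
\[
\prod\nolimits_i \delta_i = (-4)^{2^{n-1}} \prod\nolimits_i (c - \beta_i) = (-4)^{2^{n-1}} \phi^{n-1}(c) = (-4)^{2^{n-1}} \phi^n(\gamma).
\]
For $n \geq 2$ the factor $(-4)^{2^{n-1}}$ is a fourth power, hence the square condition collapses to $\phi^n(\gamma) \in (K_{n-1}^*)^2$. The main technical point I anticipate is isolating and verifying the fixed-point input for $V$, which depends on the $p$-group invariants fact in characteristic $p$; once that is granted, everything else is routine Kummer theory and a one-line identity.
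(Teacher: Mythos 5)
Your proof is correct and follows essentially the same approach as the paper's: realize $K_n/K_{n-1}$ as a Kummer $2$-extension generated by the $\sqrt{\delta_i}$, compute the degree via the relation module $V\subset\mathbb{F}_2^{2^{n-1}}$, use that a $2$-group has nonzero invariants in any nonzero $\mathbb{F}_2$-module together with transitivity to reduce to the all-ones vector, and evaluate $\prod_i\delta_i=(-4)^{2^{n-1}}\phi^n(\gamma)$. You are marginally more explicit than the paper in spelling out that the invariants of the transitive permutation module are exactly $\{0,(1,\dots,1)\}$ and that $(-4)^{2^{n-1}}$ is a square for $n\geq 2$, but these are small clarifications, not a different route.
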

 We return to the proof of Theorem \ref{FIC}. Suppose that $K_n/K_{n-1}$ is not maximal. We bound $n$ according to those cases specified above. Since $\phi$ is stable, Lemma \ref{Stoll} implies that $\phi^n(\gamma)$ is a square in $K_{n-1}$. If we write $\phi^n(\gamma)=d_n\cdot y_n^2$ for some $y_n,d_n\in R$ such that $d_n$ is a unit or a square-free polynomial, then the primes dividing $d_n$ must ramify in $K_{n-1}$.
 
On the other hand, by \cite[Corollary 2, p.159]{Number theory} and \cite[Proposition 7.9]{Rosen}, we see that the primes which ramify in $K_{n-1}$ must divide the discriminant of $\phi^{n-1}$. Let $\Delta_m$ be the discriminant of $\phi^m$. Then we have the following formula, given in \cite[Lemma 2.6]{Jones2}: 
 \begin{equation}{\label{discriminant}}\Delta_m=\pm\Delta_{m-1}^2\cdot 2^{2^n}\cdot \phi^m(\gamma).
 \end{equation} 
 In particular, if $d_n$ is not a unit, then $d_n=\prod p_i$ for some primes $p_i\in R$ such that $p_i\big\vert\phi^{m_i}(\gamma)$ and $1\leq m_i\leq n-1$. On the other hand, since $p_i\big\vert\phi^{m_i}(\gamma)$ and $p_i\big\vert\phi^{n}(\gamma)$, it follows that $p_i\big\vert\phi^{n-m_i}(0)$. To see this, note that \[\phi^{n-m_i}(0)\equiv\phi^{n-m_i}(\phi^{m_i}(\gamma))\equiv\phi^n(\gamma)\equiv0\bmod{p_i}.\] In particular, we have the refinement,   
 \begin{equation}{\label{refinement}}  d_n=\prod p_i,\;\;\text{where}\;\; p_i\big\vert\phi^{t_i}(\gamma)\;\text{or}\; p_i\big\vert\phi^{t_i}(0)\;\; \text{for some}\; 1\leq t_i\leq\Big\lfloor \frac{n}{2}\Big\rfloor.   
\end{equation} 
Moreover, it follows that
\begin{equation}{\label{point}} \big(\,\phi^{n-1}(\gamma)\,, \;y_n\cdot d_n\cdot(\phi^{n-2}(\gamma)-\gamma)\big)\in E_\phi^{(d_n)}(k[t])
\end{equation} 
is a point on the elliptic curve $E_\phi^{(d_n)}$ (each $n$ yielding a quadratic twist of a fixed curve $E_\phi$) defined by the equation  
\begin{equation}{\label{curve}}
E_\phi^{(d_n)}:\; Y^2=d_n\cdot(X-c)\cdot\big((X-\gamma)^2+c\,\big).
\end{equation}
Indeed, $E_\phi^{(d_n)}$ is non-singular since $\phi$ is irreducible. 

Our goal is to use height bounds on rational points on curves defined over function fields to obtain a bound on $n$. To do this, we need some elementary estimates for the heights of points in $\mathcal{O}_\phi(\gamma)$ and $\mathcal{O}_\phi(0)$, stated in the following lemma.
\begin{remark} The arithmetic of curves defined by iteration, such as those in (\ref{curve}), are studied in some detail over number fields and finite fields in \cite{Me} and \cite{Me1}. 
\end{remark}     
\begin{lemma}{\label{Heights}} Let $\phi(x)=(x-\gamma)^2+c$ and define $h(\phi):=\max\{h(\gamma), h(c)\}$. Then we have the following height estimates for points in $\mathcal{O}_\phi(\gamma)$ and $\mathcal{O}_\phi(0)$:
 \begin{enumerate} 
 \item If $h(\gamma)\neq h(c)$, then the height bounds below hold: 
 \begin{enumerate} \item $h(\phi^m(\gamma))=2^{m-1}\cdot h(\phi)$ for all $m\geq2$, and $h(\phi(\gamma))\leq h(\phi)$.   
 \item $h(\phi^m(0))\leq2^{m}\cdot h(\phi)$ for all $m\geq1$. 
 \end{enumerate} 
 \item If $h(\gamma)=h(c)$ and $h(\gamma-c)>0$, then let $\rho_\phi:=\log_2\Big(\frac{h(\gamma)}{h(\gamma-c)}\Big)+1$. In this case, we have the following bounds. 
 \begin{enumerate} 
 \item $h(\phi^m(\gamma))\leq h(\gamma)=2^{\rho_\phi-1}\cdot h(\gamma-c)$ for all $m\leq\rho_\phi$. 
 \item $h(\phi^m(\gamma))=2^{m-1}\cdot h(\gamma-c)$ for all $m>\rho_\phi$.   
 \item $h(\phi^m(0))=2^m\cdot h(\gamma)=2^{m+\rho_\phi-1}\cdot h(\gamma-c)$ for all $m\geq1$. 
 \end{enumerate}    
 \end{enumerate} 
 \begin{proof} Suppose that $h(\gamma)\neq h(c)$. Clearly, $\phi(\gamma)=c\leq h(\phi)$ by definition of $h(\phi)$. On the other hand, we see that \[h(\phi^2(\gamma))=h((c-\gamma)^2+c)=\max\{2h(\gamma),2h(c)\}=2h(\phi),\] since $h(\gamma)\neq h(c)$. Similarly, $\phi^3(\gamma)=(\phi^2(\gamma)-\gamma)^2+c$. But we have shown that $h(\phi^2(\gamma))>h(\phi)$, and hence $h(\phi^3(\gamma))=2h(\phi^2\gamma)=4h(\phi)$. One proceeds by induction in this way to prove that $h(\phi^m(\gamma))=2h(\phi^{m-1}(\gamma))=2^{m-1}\cdot h(\phi)$. 
 
 Similarly, we deduce height bounds for points in $\mathcal{O}_\phi(0)$. Clearly, $h(\phi(0))=h(\gamma^2+c)\leq\max\{2h(\gamma),h(c)\}\leq2h(\phi)$. Hence, by induction we have that \[h(\phi^m(0))\leq\max\{2\cdot h\big(\phi^{m-1}(0)-\gamma\big),h(c)\}\leq\max\{2\cdot h(\phi^{m-1}(0)),2\cdot h(\gamma),h(c)\}\leq2^m\cdot h(\phi),\] which finishes the proof of part $1$.  

Now suppose that $h(\gamma)=h(c)$ and that $h(\gamma-c)>0$. Let $b:=\gamma-c$. One sees that $\phi^m(\gamma)=(((b^2+b)^2+b)^2\dots+b)^2+c$. Hence, $h(\phi^m(\gamma))\leq\max\{2^{m-1}\cdot h(b),h(c)\}$ and equal to this maximum whenever $2^{m-1}\cdot h(b)\neq h(c)$. However, since $h(b)>0$, this is precisely when $m\neq\rho_\phi$, as $2^{\rho_\phi-1}\cdot h(b)=h(c)$. This establishes cases $(a)$ and $(b)$ of part $2$. 

Finally, we prove the height bounds for points in $\mathcal{O}_\phi(0)$ stated in part $2$. Clearly, $\phi(0)=\gamma^2+c$. Since, $h(\gamma)=h(c)>0$, we see that $h(\phi(0))=2\cdot h(\gamma)$. Similarly, $\phi^2(\gamma)=(\gamma^2+c-\gamma)^2+c$ and $h(\phi^2(0))=4\cdot h(\gamma)$. One continues in this way to establish part $(c)$ by induction. This completes the proof of Lemma \ref{Heights}.           
 \end{proof}  
\end{lemma}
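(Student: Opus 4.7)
The plan is to do everything with the non-archimedean properties of the polynomial degree on $R = k[t]$: $h(ab) = h(a) + h(b)$ and $h(a+b) \leq \max\{h(a), h(b)\}$, with equality whenever $h(a) \neq h(b)$. The entire argument runs on the recursions $\phi^m(\gamma) = (\phi^{m-1}(\gamma)-\gamma)^2 + c$ and $\phi^m(0) = (\phi^{m-1}(0)-\gamma)^2 + c$, tracking when the squared term strictly dominates and when it meets a term of equal degree.

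For Part $1$, I would assume $h(\gamma) \neq h(c)$ and first verify base cases by direct computation: $h(\phi(\gamma)) = h(c) \leq h(\phi)$, and $h(\phi^2(\gamma)) = h((c-\gamma)^2 + c) = 2h(\phi)$, using that $h(c-\gamma) = h(\phi)$ (the degrees differ so there is no cancellation) together with $2h(\phi) > h(c)$. The inductive step for $h(\phi^m(\gamma)) = 2^{m-1}h(\phi)$ with $m \geq 2$ is then immediate: once $h(\phi^{m-1}(\gamma)) > h(\gamma)$, the shift by $-\gamma$ leaves the degree alone, squaring doubles it to $2^{m-1} h(\phi) > h(c)$, and adding $c$ leaves it alone. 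Part $(b)$ is the same induction with the triangle inequality in place of equality, yielding $h(\phi^m(0)) \leq 2^m h(\phi)$.

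For Part $2$, put $b := \gamma - c$, so that $h(c) = h(\gamma) = 2^{\rho_\phi - 1} h(b)$ by definition of $\rho_\phi$. Unrolling the recursion with $\gamma = b + c$ gives $\phi^m(\gamma) = P_m(b) + c$, where $P_2(b) = b^2$ and $P_m(b) = (P_{m-1}(b) - b)^2$ for $m \geq 3$. Since $h(b) > 0$ and the leading $b^{2^{m-1}}$ term never cancels, a short induction yields $h(P_m(b)) = 2^{m-1} h(b)$. Comparing with $h(c) = 2^{\rho_\phi - 1} h(b)$ produces a trichotomy in $m$: $h(c)$ strictly dominates for $m < \rho_\phi$ (forcing equality $h(\phi^m(\gamma)) = h(c)$), $h(P_m(b))$ strictly dominates for $m > \rho_\phi$ (forcing $h(\phi^m(\gamma)) = 2^{m-1} h(b)$), and at $m = \rho_\phi$ the two summands have equal degree and may cancel, leaving only the upper bound $h(\phi^m(\gamma)) \leq 2^{\rho_\phi - 1} h(b) = h(\gamma)$. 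This gives $(a)$ and $(b)$ at once. For $(c)$, the orbit of $0$ is cleaner: $h(\phi(0)) = h(\gamma^2 + c) = 2h(\gamma)$ since the square dominates $c$, and once $h(\phi^{m-1}(0)) > h(\gamma) \geq h(c)$, the same pattern as in Part $1$ yields $h(\phi^m(0)) = 2^m h(\gamma) = 2^{m + \rho_\phi - 1} h(b)$.

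The only subtle point, and the only place an inequality must replace an equality, is the threshold $m = \rho_\phi$ in $(a)$: everywhere else the non-archimedean estimate is applied to summands of different degrees and is therefore sharp, but at that single value the two summands have the same degree and their leading coefficients could in principle cancel. The statement of $(a)$ is written as an inequality precisely to absorb this possibility, so no delicate cancellation analysis is required; everything else reduces to a mechanical induction on $m$.
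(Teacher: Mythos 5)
Your proposal is correct and follows essentially the same route as the paper: both arguments rest on the non-archimedean degree estimates $h(a+b)\leq\max\{h(a),h(b)\}$ (with equality when the degrees differ) applied inductively to the recursions for $\phi^m(\gamma)$ and $\phi^m(0)$, and in part $2$ both expand $\phi^m(\gamma)$ as a nested polynomial in $b=\gamma-c$ plus $c$ and compare $2^{m-1}h(b)$ with $h(c)=2^{\rho_\phi-1}h(b)$. Your explicit handling of the possible cancellation at the threshold $m=\rho_\phi$ is a slightly more careful statement of the same point the paper makes with its ``equal to this maximum whenever $2^{m-1}h(b)\neq h(c)$'' remark.
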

\begin{remark} Note that Lemma \ref{Heights} implies that $\mathcal{O}_\phi(\gamma)$ is bounded if and only if $h(\gamma-c)=0$, or equivalently if and only if $\phi$ is isotrivial.  
\end{remark} 
We return to the proof of Theorem \ref{FIC} and use \cite[Theorem 6]{Mason} applied to the the curve (\ref{curve}) and the point (\ref{point}) to obtain the bound
\begin{equation}{\label{Mason}}
h_{K_1}(\phi^{n-1}(\gamma))\leq26\cdot h_{K_1}\big(E_\phi^{(d_n)}\big)+8\cdot g_{K_1}+4\cdot(r_{K_1}-1).
\end{equation} 
Here $h_{K_1}$ denotes the logrithmic height associated to the function field $K_1$ and $h_{K_1}\big(E_\phi^{(d_n)}\big)$ is the maximum height (relative to $K_1$) of the coefficients of $d_n\cdot(X-c)\cdot\phi(X)$; see the introduction of \cite{Schmidt} for the relevant background. Moreover, $g_{K_1}$ and $r_{K_1}$ denote the genus and number of infinite places of $K_1$ respectively.  

In particular, \cite[Lemma H]{Schmidt} and (\ref{refinement}) together imply that
\begin{small}\begin{equation}{\label{Thue}}h(\phi^{n-1}(\gamma))\leq26\cdot\Big( h\big(\phi(\gamma)\big)+\dots +h\big(\phi^{\lfloor\frac{n}{2}\rfloor}(\gamma)\big)+h\big(\phi(0)\big)+\dots +h\big(\phi^{\lfloor\frac{n}{2}\rfloor}(0)\big)+3h(\phi)\Big)+32\cdot h(\phi)+4.
\end{equation}\end{small} 
We are ready to use the height bounds in Lemma \ref{Heights}.  

\textbf{Part (1).} If $h(\gamma)\neq h(c)$, then part $1$ of Lemma \ref{Heights} implies that
\[2^{n-2}\cdot h(\phi)\leq 26\cdot h(\phi)\cdot\big(1+2+\cdots+2^{\lfloor\frac{n}{2}\rfloor-1}+2+\cdots+2^{\lfloor\frac{n}{2}\rfloor}+3\big)+32\cdot h(\phi)+4.\] 
Note that since $h(\gamma)\neq h(c)$, we have that $h(\phi)\neq0$. In particular, under the hypotheses of Theorem $\ref{FIC}$, if $K_n/K_{n-1}$ is not maximal and $h(\gamma)\neq h(c)$, then 
\begin{equation}\frac{2^{n-2}}{2^{\lfloor\frac{n}{2}\rfloor+1}+2^{\lfloor\frac{n}{2}\rfloor} +1}\leq36. 
\end{equation} 
Hence $n\leq16$. As for the index bound, it suffices to show that $\log_2|\Aut(T_n(\phi)):G_n(\phi)|\leq2^{16}-17$ for all $n\geq17$. To do this, note that \[ |K_{17}:K|=|K_{17}:K_{16}|\cdot|K_{16}:K|=2^{2^{16}}\cdot|K_{16}:K|\geq2^{2^{16}}\cdot2^{16}=2^{2^{16}+16},\] since $K_{17}/K_{16}$ is maximal and $\phi^{16}$ is an irreducible polynomial of degree $2^{16}$. However, since $\Aut(T_n)$ is the $n$-fold wreath product of $\mathbb{Z}/2\mathbb{Z}$, it follows that $|\Aut(T_n)|=2^{2^n-1}$. Hence, 
\[|\Aut(T_{17}(\phi)):G_{17}(\phi)|=\frac{2^{2^{17}-1}}{|K_{17}:K|}\leq2^{(2^{17}-1)-(2^{16}+16)}=2^{2^{16}-17}\] as claimed. Similarly, we see that for all $n\geq17$,  
\[\log_2|\Aut(T_{n}(\phi)):G_{n}(\phi)|=\log_2\frac{2^{2^{n}-1}}{|K_{16}:K|\cdot\prod_{j=16}^{n-1}|K_{j+1}:K_j|}\leq2^n-1-(2^{n-1}+2^{n-2}+\dots +2^{16}+16),\] since all of the subextensions $K_{j+1}/K_j$ are maximal. Furthermore, \[2^n-1-(2^{n-1}+2^{n-2}+\dots +2^{16}+16)=2^n-1-(2^n-1-(2^{16}-1)+16)=2^{16}-17,\] and we get the index bound in part $1$ of Theorem \ref{FIC}. 

\textbf{Part (2).} Similarly, suppose that $h(\gamma)=h(c)$ and that $h(\gamma-c)\neq0$. Let $\rho_\phi:=\log_2\Big(\frac{h(\gamma)}{h(\gamma-c)}\Big)+1$ as in part $2$ of Lemma \ref{Heights}. If $K_n/K_{n-1}$ is not maximal and $n>2\cdot\log_2\Big(78\cdot\frac{h(\gamma)}{h(\gamma-c)}\Big)+9$, then Lemma \ref{Heights} and (\ref{Thue}) imply that \[2^{n-2}\cdot h(\gamma-c)\leq 26\cdot h(\gamma-c)\cdot\big(2\cdot(2^{\rho_\phi}+2^{\rho_\phi+1}+\dots+2^{\lfloor\frac{n}{2}\rfloor+\rho_\phi-1})+3\cdot2^{\rho_\phi-1}\big)+32\cdot2^{\rho_\phi-1}\cdot h(\gamma-c)+4,\] since in particular $n-1>\rho_\phi$. From here, we obtain the bound \[2^{n-2}\leq78\cdot2^{\lfloor\frac{n}{2}\rfloor+\lambda+1}.\] However, this forces $n\leq2\cdot\log_2\Big(78\cdot\frac{h(\gamma)}{h(\gamma-c)}\Big)+9$, which contradicts our assumption on $n$.

\textbf{Part (3).} Finally, assume that $h(\gamma-c)=0$. It follows that $\phi^n(\gamma)=c+c_n$ for some constant functions $c_n\in k$. In particular, $h(\phi^n(\gamma))=h(\gamma)=h(c)>0$ for all $n\geq1$. Since $\mathcal{O}_\phi(\gamma)$ is infinite, $\phi^m(\gamma)\neq \phi^n(\gamma)$ for all $n\neq m$. Equivalently, $c_n\neq c_m$ for all $n\neq m$. Therefore, 
\begin{equation}{\label{gcd}} \gcd(\phi^n(\gamma),\phi^m(\gamma))=1,\;\;\text{for all}\;n\neq m.
\end{equation} 
Now, suppose that $K_n/K_{n-1}$ is not maximal and write $\phi^n(\gamma)=d_n\cdot y_n^2$ as above. Then (\ref{refinement}) and (\ref{gcd}) together imply that $d_n$ must be constant. In particular, since $h(\phi^n)>0$, we see that  $h(y_n)>0$. Hence the discriminant $\disc_{\,t}(\phi^n(\gamma))=0$.

Let $p(s):=\disc_{\,t}(c(t)+s)\in k[s]$. One checks that $\deg(p)=h(c)-1=h(\gamma)-1$. However, we have shown that if $K_n/K_{n-1}$ is not maximal, then $p(c_n)=0$. Hence,
\begin{equation}{\label{count isotrivial}}
\#\big\{n\,\big\vert \;K_{n}/K_{n-1}\;\text{is not maximal}\big\}\leq\deg(p)=h(\gamma)-1.
\end{equation} 
This completes the proof of Theorem \ref{FIC}.                             
 \end{proof} 
 \begin{remark}{\label{improvement}} Richard Pink has pointed out to me how to improve the bound in part $3$ of Theorem \ref{FIC} using the following argument: consistent with the setup in \cite{Pink}, let $f(x)=x^2+a\in k[x]$ be a post-critically infinite polynomial. For any transcendental $s\in k(t)$, let $G_{k(t),s}$ denote the inverse limit of the Galois groups over $k(t)$ of $f^n(x)-s$. It was shown in \cite[Theorem 4.8.1]{Pink} that $G_{k(s),s}\cong\Aut(T_\infty)$. Now set $d:=|k(t):k(s)|$ and let $e=\ord_2(d)$ be the maximal exponent of $2$ in $d$. Then $G_{k(t),s}$ is a subgroup of $G_{k(s),s}\cong\Aut(T_\infty)$ of index at most $2^{e}$. In our situation, apply the transformation $x\rightarrow x+\gamma$ to $\phi$ and take $s=\gamma$ so that $G_\infty(\phi) = G_{k(t),s}$.
 \end{remark} 
We note how Theorem \ref{FIC} is also useful when studying the Galois theory of certain non-stable polynomials in $k(t)[x]$ (not necessarily polynomial coefficients). To see this, we fix some notation. Given $\phi(x)=(x-\gamma)^2+c\in R[x]$ and any rational function $f\in k(t)$, define
\[\phi_f(x):=\big(x-\gamma(f)\big)^2+c(f).\] With this definition in place, Tom Tucker has pointed out the following corollary of Theorem \ref{FIC}.
\begin{corollary}{\label{Cor1}} Let $\phi(x)=(x-\gamma)^2+c\in R[x]$ satisfy the conditions of Theorem \ref{FIC} and let $f\in k(t)$ be non-constant. Then $|\Aut(T_\infty): G_\infty(\phi_f)|$ is finite. In particular, the number of irreducible factors of $\phi_f^n$ is bounded independently of $n$.   
\end{corollary}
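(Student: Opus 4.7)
The plan is to transfer the conclusion of Theorem \ref{FIC} from an ``abstract'' polynomial over $k(s)$ to $\phi_f$ over $k(t)$ via the finite field extension $k(s) \hookrightarrow k(t)$ induced by $s \mapsto f(t)$.

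Concretely, first I would introduce a fresh transcendental $s$ and consider $\Phi(x) = (x - \gamma(s))^2 + c(s) \in k(s)[x]$. Since $k(s)$ is isomorphic to $K = k(t)$ as an abstract field, $\Phi$ inherits all the hypotheses of Theorem \ref{FIC} from $\phi$, so $G_\infty(\Phi)$ has finite index in $\Aut(T_\infty)$. Next I would embed $k(s) \hookrightarrow k(t)$ via $s \mapsto f(t)$; because $f$ is non-constant, this is a finite extension with $[k(t):k(s)] = \deg(f)$, and under it $\Phi^n(x)$ pulls back to $\phi_f^n(x)$ for every $n$. Writing $M_n$ for the splitting field of $\Phi^n$ over $k(s)$, the compositum $k(t) \cdot M_n$ is then the splitting field of $\phi_f^n$ over $k(t)$, and restriction to $M_n$ produces an injection
\[\Gal\bigl(k(t) \cdot M_n / k(t)\bigr) \hookrightarrow \Gal(M_n/k(s))\]
of image index $[M_n \cap k(t) : k(s)]$, which divides $\deg(f)$.

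Because the $2^n$ roots of $\Phi^n$ in $M_n$ are distinct (by stability, hence separability of $\Phi^n$ over $k(s)$) and remain distinct in the compositum $k(t) \cdot M_n$, the polynomial $\phi_f^n$ is separable even though $\phi_f$ need not be stable. Thus $G_n(\phi_f)$ embeds in $G_n(\Phi)$ inside $\Aut(T_n)$ with index at most $\deg(f)$, and passing to the inverse limit yields
\[\bigl[\Aut(T_\infty) : G_\infty(\phi_f)\bigr] \;\leq\; \bigl[\Aut(T_\infty) : G_\infty(\Phi)\bigr] \cdot \deg(f) \;<\; \infty,\]
by Theorem \ref{FIC} applied to $\Phi$. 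For the second assertion, the number of irreducible factors of $\phi_f^n$ equals the number of $G_n(\phi_f)$-orbits on the $2^n$ roots of $\phi_f^n$; since $\Aut(T_n)$ acts transitively on this set of leaves, a standard double-coset count bounds the number of orbits of any subgroup $H$ by the index $[\Aut(T_n):H]$, and we have just shown this index is bounded by a constant $C$ independent of $n$.

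The main thing to watch is the subtlety that $\phi_f$ is \emph{a priori} only a polynomial in $k(t)[x]$ (not in $R[x]$) and may fail the stability hypothesis of Theorem \ref{FIC}, so one cannot apply that theorem to $\phi_f$ directly; the whole point of routing through $\Phi$ over $k(s)$ and then descending through a finite extension is precisely to sidestep this, and the residual check that $\phi_f^n$ remains separable (so that $G_n(\phi_f) \hookrightarrow \Aut(T_n)$ is well defined) follows automatically from the separability of $\Phi^n$ together with the fact that distinct algebraic elements of $M_n$ stay distinct in any compositum.
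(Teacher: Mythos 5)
Your argument is correct and is essentially the paper's proof: both routes identify $\phi_f$ over $k(t)$ with (the base-change of) a polynomial over the subfield $k(f)\cong k(s)$ satisfying the hypotheses of Theorem \ref{FIC}, apply that theorem over $k(s)$, and then use $[k(t):k(f)] = h(f)$ to control how much the index can grow under the finite base extension. Your additional details---verifying separability of $\phi_f^n$ by noting the roots of $\Phi^n$ stay distinct in the compositum, and bounding the number of irreducible factors via the orbit/double-coset count against the transitive $\Aut(T_n)$-action---spell out steps the paper's one-line proof leaves implicit.
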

\begin{proof} We apply Theorem \ref{FIC} to the rational function field $K_0:=k(f)\cong k(t)$, obtaining index bounds for the arboreal representation over the ground field $k(f)$. We then use the fact that the degree of the extension $k(t)/k(f)$ is equal to $h(f)$. In particular, one sees that \[|\Aut(T_\infty(\phi_f)):G_\infty(\phi_f)|\leq |\Aut(T_\infty): G_\infty(\phi)|\cdot |k(t):k(f)|,\] with $|\Aut(T_\infty): G_\infty(\phi)|$ bounded as in Theorem \ref{FIC}.   
\end{proof}    
As an example of how to apply Corollary \ref{Cor1}, let $\phi(x)=(x-t^3+1)^2-t$. One can use Proposition \ref{stability} below to prove that $\phi$ is stable. In particular, Corollary \ref{Cor1} implies that $G_\infty(\phi_{1/t^2})$ has finite index in $\Aut(T_\infty)$, even though \[\phi_{1/t^2}\big(x\big)=\Big(x+\frac{t^6-t^5-1}{t^6}\Big)\cdot\Big(x+\frac{t^6+t^5-1}{t^6}\Big)\] is reducible. This illustrates how one can circumvent stability. 

However, in the special family $\phi_f(x)=x^2+f(t)$, one can do better than the index bound given in the proof of Corollary \ref{Cor1}, which will grow with the degree of $f$. 
 \begin{corollary}{\label{uniform bound}}Let $f\in R$ be a non-constant polynomial such that $-f$ is not a square, and let $\phi_f(x)=x^2+f$. Then the uniform index bound in part $1$ of Theorem \ref{FIC} holds for $G_\infty(\phi_f)$. 
 \begin{proof}Since the critical point $\gamma=0$ and $f$ is non-constant (hence $h(f)\neq h(\gamma)$ as in part $1$ of the theorem), it suffices to check that $\phi_f$ is stable to apply Theorem \ref{FIC}. However, by \cite[Proposition 4.2]{Jones2}, it suffices to show that the adjusted critical orbit $\{-\phi_f(0),\phi_f^2(0),\phi_f^3(0),\dots\}$ contains no perfect squares. By assumption, $-\phi_f(0)=-f$ is not a square. On the other hand, suppose that \[g^2=\phi^n_f(0)=((((f^2+f)^2+f)^2+f\dots)^2+f\] for some polynomial $g\in R$ and some $n\geq2$. Hence, $g^2=f^2\cdot h^2+f=f\cdot(f\cdot h^2+1)$ for some $h\in R$. Since, $f$ and $f\cdot h^2+1$ are coprime and $R$ is a UFD, it follows that $f=s^2$ and $f\cdot h^2+1=t^2$ for some $s,t\in R$. In particular, $1=(t-h\cdot s)(t+h\cdot s)$ and both $t-h\cdot s$ and $t+h\cdot s$ must be constant. Hence, $2h\cdot s=(t+h\cdot f)-(t-h\cdot f)$ must be constant. This is impossible, since $f$ (and hence $s$) is assumed to be non-constant.           
 \end{proof}
 \end{corollary}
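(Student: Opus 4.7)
The plan is to apply part (1) of Theorem \ref{FIC} directly to $\phi_f$, so the whole argument reduces to verifying the hypotheses of that theorem. With $\gamma=0$ and $c=f$, the condition $h(\gamma)\neq h(c)$ is immediate since $f$ is non-constant, and $h(\phi_f)=h(f)>0$. Post-critical infinitude likewise follows without work from Lemma \ref{Heights}(1), since $h(\phi_f^m(0))=2^{m-1}h(f)$ for $m\geq 2$ tends to infinity. So the only substantive content of the corollary is \emph{stability} of $\phi_f$.

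To prove stability, I would appeal to the standard criterion of Jones (Proposition 4.2 of \cite{Jones2}), which, for $\phi_f(x)=x^2+f$, says that $\phi_f$ is stable if and only if no element of the adjusted critical orbit $\{-f,\phi_f^2(0),\phi_f^3(0),\dots\}$ is a square in $R$. The case $-f$ is taken care of by hypothesis, so the task is to show $\phi_f^n(0)$ is not a square in $R$ for any $n\geq 2$. The key structural observation is that $f\mid \phi_f^m(0)$ for every $m\geq 1$: this is a trivial induction on the recurrence $\phi_f^{m+1}(0)=(\phi_f^m(0))^2+f$. Writing $\phi_f^{n-1}(0)=f\cdot h$ with $h\in R$ therefore gives the factorization
\[ \phi_f^n(0)=f^2h^2+f=f\cdot(fh^2+1), \]
in which the two factors $f$ and $fh^2+1$ are coprime in $R$ (any prime dividing $f$ leaves $fh^2+1\equiv 1$).

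From here I would use that $R=k[t]$ is a UFD: if $\phi_f^n(0)=g^2$, the coprime factorization forces each factor to itself be a square, i.e.\ $f=s^2$ and $fh^2+1=u^2$ for some $s,u\in R$. The second equation rewrites as the Pell-type relation $(u-sh)(u+sh)=1$, so both factors are units in $k^\times$; subtracting, $sh$ is a constant in $k$. Since $f$ is non-constant, $s$ is non-constant, which forces $h=0$ and hence $\phi_f^{n-1}(0)=0$. But $\phi_f^{n-1}(0)$ has positive $t$-degree (either $h(f)>0$ for $n-1=1$, or $2^{n-2}h(f)$ by Lemma \ref{Heights} for $n-1\geq 2$), a contradiction. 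Thus every $\phi_f^n(0)$ with $n\geq 2$ is a non-square, stability holds, and Theorem \ref{FIC}(1) applies to yield the uniform index bound.

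The main obstacle I anticipate is the one carried by the UFD step: the argument depends crucially on being in the polynomial ring $R=k[t]$ (where coprime factors of a square are squares up to constants) and on $f$ being genuinely non-constant, so that the Pell-type identity $(u-sh)(u+sh)=1$ can be pushed to a contradiction via degrees. If one wished to weaken the polynomial assumption or to work over a more general base (e.g.\ $k(t)$ directly), this step would need substantial reworking; in the stated generality, however, it is purely formal once one has the observation $f\mid \phi_f^m(0)$.
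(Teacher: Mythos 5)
Your proof is correct and follows essentially the same route as the paper's: reduce via Jones's criterion to showing $\phi_f^n(0)$ is never a square, factor $\phi_f^n(0)=f\cdot(fh^2+1)$ into coprime factors in the UFD $R$, and derive a contradiction from the resulting Pell-type unit equation $(u-sh)(u+sh)=1$. You in fact close a small gap in the paper's write-up, which concludes directly that $sh$ constant and $s$ non-constant is impossible without ruling out $h=0$; your observation that $h=0$ would force $\phi_f^{n-1}(0)=0$, impossible by the positive degree of $\phi_f^{n-1}(0)$, completes the argument (and you also spell out the post-critical infinitude hypothesis, which the paper leaves implicit).
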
 
 \begin{remark}Consistent with the notation in \cite[Theorem 1.1]{Me}, we have proven that $S^{(n)}(R)=\varnothing$ for all $n\geq17$. This answers a question, posed at the end of \cite{Me} for number fields, in the rational function field setting.   
\end{remark} 
We now use height bounds for rational points on curves to show how to effectively determine if a non-isotrivial polynomial is stable, crucial if one wishes to bound the index of the arboreal representation. 
\begin{proposition}{\label{stability}} Let $\gamma,c\in R$ and let $\phi(x)=(x-\gamma)^2+c\in R[x]$ be non-isotrivial. If $c\cdot \phi(c)\neq0$, then $\phi$ is stable whenever one of the following conditions is satisfied.
\begin{enumerate} 
\item $h(\gamma)\neq h(c)$ and the set $\big\{-\phi(\gamma),\phi^2(\gamma),\phi^3(\gamma),\dots \phi^{8}(\gamma) \big\}$ does not contain any squares. 
\item $h(\gamma)=h(c)$ and the set $\big\{-\phi(\gamma),\phi^2(\gamma),\phi^3(\gamma),\dots \phi^{[s_\phi]}(\gamma) \big\}$ does not contain any squares, where $s_\phi:=\log_2\Big(110\cdot\frac{h(\gamma)}{h(\gamma-c)}\Big)+3$ and $[ \cdot]$ denotes the nearest integer function.  
\end{enumerate} 
\end{proposition}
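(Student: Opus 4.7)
The approach is to verify the generalized Jones stability criterion from \cite[Proposition 4.2]{Jones2} (used in the same way as in the proof of Corollary \ref{uniform bound}): for $\phi(x)=(x-\gamma)^2+c$, stability is equivalent to no member of the adjusted critical orbit $\{-\phi(\gamma),\phi^2(\gamma),\phi^3(\gamma),\dots\}$ being a square in $R$. Since the hypotheses rule out squares among the first several iterates, the task reduces to proving that $\phi^n(\gamma)$ cannot be a square in $R$ for $n>8$ in case (1) and for $n>[s_\phi]$ in case (2).

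This is a stability variant of the argument used for Theorem \ref{FIC}. Suppose $\phi^n(\gamma)=y^2$ for some $y\in R$ and some such $n\geq 3$. The identity $\phi(X)-c=(X-\gamma)^2$ applied at $X=\phi^{n-2}(\gamma)$ gives $\phi^{n-1}(\gamma)-c=(\phi^{n-2}(\gamma)-\gamma)^2$, which combined with $y^2=(\phi^{n-1}(\gamma)-\gamma)^2+c$ produces the $K$-rational point
\[\bigl(\phi^{n-1}(\gamma),\;y\cdot(\phi^{n-2}(\gamma)-\gamma)\bigr)\in E_\phi(K)\]
on $E_\phi:Y^2=(X-c)\bigl((X-\gamma)^2+c\bigr)$, which is (\ref{curve}) with trivial twist $d_n=1$. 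The assumption $c\cdot\phi(c)\neq 0$ is precisely the condition that the three roots $c$ and $\gamma\pm\sqrt{-c}$ of $(X-c)\phi(X)$ are distinct, so $E_\phi$ is an honest elliptic curve.

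Mason's theorem from \cite[Theorem 6]{Mason} now bounds the height of this point. Because $\phi^n(\gamma)$ is an actual square (not merely a square times a squarefree $d_n$), the refinement (\ref{refinement}) is vacuous and the large summation over $h(\phi^{t_i}(\gamma))$ and $h(\phi^{t_i}(0))$ that drove the bound in (\ref{Thue}) collapses, leaving an inequality of the form $h(\phi^{n-1}(\gamma))\leq 110\cdot h(\phi)+O(1)$. The constant $110$ matches the $26\cdot 3+32$ contribution already visible in (\ref{Thue}), and its appearance in $s_\phi$ is not a coincidence. Combining this with the height formulas in Lemma \ref{Heights} yields the desired contradictions: in case (1), $h(\phi^{n-1}(\gamma))=2^{n-2}h(\phi)$ forces $2^{n-2}\leq 110+O(1/h(\phi))$, hence $n\leq 8$; in case (2), substituting $h(\phi^{n-1}(\gamma))=2^{n-2}h(\gamma-c)$ (valid for $n>\rho_\phi+1$) and $h(\phi)=2^{\rho_\phi-1}h(\gamma-c)$ rearranges to $n<\log_2\bigl(110\cdot h(\gamma)/h(\gamma-c)\bigr)+3=s_\phi$.

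The main obstacle is bookkeeping: keeping the Mason-type constant pinned precisely at $110$ (not something larger), and making sure Lemma \ref{Heights} applies in the regime of $n$ under consideration so that the clean equalities $h(\phi^{n-1}(\gamma))=2^{n-2}h(\phi)$ (resp.\ $2^{n-2}h(\gamma-c)$) are in force rather than the weaker inductive inequalities available for small iterates. Any small-$n$ configurations where these bounds degrade are absorbed into the finite square-checking hypothesis of the statement. Beyond this, the argument is essentially a direct transcription of the curve-plus-Mason strategy from Theorem \ref{FIC}, stripped of the twist.
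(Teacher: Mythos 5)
Your proposal is correct and follows the paper's own argument essentially step for step: invoke Jones's stability criterion, observe that a square $\phi^n(\gamma)=y^2$ yields the $K$-rational point $\bigl(\phi^{n-1}(\gamma),\,y\cdot(\phi^{n-2}(\gamma)-\gamma)\bigr)$ on the untwisted curve $E_\phi$, use $c\cdot\phi(c)\neq 0$ for nonsingularity, apply Mason's theorem together with Schmidt's Lemma H, and then feed in Lemma \ref{Heights} to force $n\leq 8$ (resp.\ $n\leq s_\phi$). Your accounting of the constant $110 = 26\cdot 3 + 32$ as what survives of (\ref{Thue}) once the twist contributions drop out is exactly what the paper does.
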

\begin{proof} By \cite[Proposition 4.2]{Jones2}, it suffices to show that the set $\{-\phi_f(0),\phi_f^2(0),\phi_f^3(0),\dots\}$ contains no squares, to ensure that $\phi$ is stable. It is our goal to use height bound arguments, similar to those in Theorem \ref{FIC}, to bound to the largest iterate one must check.

If $\phi^n(\gamma)=y_n^2$ for some $n\geq2$, then 
\[ \big(\phi^{n-1}(\gamma),y_n\cdot (\phi^{n-2}(\gamma)-\gamma) \big)\;\;\text{is a point on}\;\; E_\phi: Y^2=(X-c)\cdot\phi(X).\] 
Moreover, $E_\phi$ is nonsingular since $c\cdot \phi(c)\neq0$. Let $K_1/k(t)$ be the splitting field of $\phi$ over $k(t)$. Then \cite[Theorem 6]{Mason} implies that  
\[h_{K_1}(\phi^{n-1})\leq26\cdot h_{K_1}(E_\phi)+8g_{K_1}+4(r_{K_1}-1).\] 
In particular, if $h(\gamma)\neq h(c)$, then \cite[Lemma H]{Schmidt} and Lemma \ref{Heights} imply that 
\[ 2^{n-2}\cdot h(\phi)\leq110\cdot h(\phi)+4.\] 
Hence, $n\leq8$ as claimed. Similarly, if $h(\gamma)=h(c)$ and $n>s_\phi$ (in particular $n-1>\rho_\phi$), then Lemma \ref{Heights} implies that 
\[2^{n-2}\cdot h(\gamma-c)\leq78\cdot2^{\rho_\phi-1}\cdot h(\gamma-c)+32\cdot 2^{\rho_\phi-1}\cdot h(\gamma-c)+4.\]
Hence, $2^{n-2}\leq110\cdot 2^{\rho_\phi}$ and $n\leq s_\phi$. This contradicts our assumption on $n$. In particular, if $\phi^n(\gamma)$ is a square, then $n$ is bounded as claimed.  
\end{proof}   
We take the time now to note that the index bounds in Theorem \ref{FIC} are not sharp; see Proposition \ref{examples} below for isotrivial examples. As for non-isotrivial examples, let $k=\mathbb{Q}$ and $\phi=x^2+t$. One can show directly with Magma \cite{Magma} that $\phi$ is stable using Proposition \ref{stability}. Moreover, it can also be checked with Magma that $\phi^n(0)$ contains square-free primitive prime divisors for all $n\leq17$. That is to say, for all $2\leq n\leq 16$, not all primes appearing in the factorization of $d_n$ (as in \ref{refinement}) divide lower iterates. In particular, $G_\infty(\phi)\cong\Aut(T_\infty)$ by the proof of Theorem \ref{FIC} part $1$. This greatly improves the index bound given in the theorem.   

As for a non-trivial index example, let $\phi=x^2+t$ and $f=-t^2-1$. Then by Corollary \ref{Cor1} and the previous paragraph, we see that $|\Aut(T_\infty(\phi_{f})):G_\infty(\phi_{f})|\leq2$. However, one checks directly that the Galois group of second iterate has index $|\Aut(T_2(\phi_{f})):G_2(\phi_{f})|=2$. Hence, the full index of $G_\infty(\phi_{f})$ in $\Aut(T_\infty(\phi_{f}))$ is $2$, significantly less than the index bound given in Theorem \ref{FIC} for the stable polynomial $x^2+(-t^2-1)$. 

This leads to the problem of determining the smallest possible upper bound, depending on $k$, for the index of $G_\infty(\phi)$ inside $\Aut(T_\infty(\phi))$ as we range over all stable polynomials satisfying $h(\gamma)\neq h(c)$. It is likely much smaller than that given in Theorem \ref{FIC}.   
       
Similarly, one can drastically improve the bounds for the index of the arboreal representation in certain families of isotrivial polynomials using the proof of Theorem \ref{FIC} part $3$, instead of the statement. Specifically, consider the family
\[\phi(x):=\big(x-t^d\big)^2+t^d+m, \;\;\;\;\text{for}\;\;d\geq1\;\; \text{and}\;\; m\in \mathbb{Q}\setminus\{0,-1,-2\}.  \]     
It is easy to see that \[\mathcal{O}_{\phi}(t^d)=\big\{t^d+m,\, t^d+(m^2+m),\, t^d+(((m^2+m)^2+m),\dots\big\},\]
 is post-critically infinite. Moreover, $h(t^d)=d=h(t^d+m)$ and $h(t^d-(t^d+m))=0$. Hence, according to Theorem \ref{FIC} part $3$, the bound on the index of $G_\infty(\phi)$ in $\Aut(T_\infty(\phi))$ weakens as $h(\phi)$ becomes large (or with Pink's refinement given in Remark \ref{improvement}, as the $2$-part of $d$ grows). However, as a generalization of \cite[Theorem 4.1]{B-J}, we use the ideas in the proof of Theorem \ref{FIC} to strengthen the index bounds in this particular family.    
 \begin{proposition}{\label{examples}} For $d\geq1$, let $\phi(x):=\big(x-t^d\big)^2+(t^d+m)\,\in k[t]$ and let $m$ be a constant function. If $\mathcal{O}_\phi(t^d)$ is infinite, then  
 \[\Aut(T_\infty(\phi))\cong G_\infty(\phi).\] In particular, if $k=\mathbb{Q}$ and $m\notin\{0,-1,-2\}$, then $G_\infty(\phi)\cong\Aut(T_\infty(\phi))$.   
\begin{proof} Let $f(s)=s^2+m$, so that $\phi^n(\gamma)=t^d+f^n(0)$. Since $\mathcal{O}_\phi(\gamma)$ is infinite, it follows that $f^n(0)\neq f^r(0)$ for all $n\neq r$. In particular, $f^n(0)\neq0$ for all $n\geq1$. Hence the discriminant of both $-\phi(\gamma)$ and $\phi^n(\gamma)$ are non-zero for all $n\geq1$. It follows from \cite[Proposition 4.2]{Jones2}, that $\phi$ is a stable polynomial. 

On the other hand, since $\phi^n(t^d)-\phi^r(t^d)\in k^*$, it follows that $gcd\big(\phi^n(t^d),\phi^r(t^d)\big)=1$ for all $n\neq r$. Hence, if the extension $K_n/K_{n-1}$ is not maximal, then (\ref{refinement}) implies that $\phi^n(\gamma)=d_n\cdot y_n^2$ for some unit $d_n$. However, we have shown that $\phi^m(\gamma)$ is square-free (non-zero discriminant), hence $y_n$ must also be a constant. This implies that $\phi^m(\gamma)$ is a constant, contradicting our assumption that $d\geq1$. It follows that $\Aut(T_\infty(\phi))\cong G_\infty(\phi)$. 

For the case when $k=\mathbb{Q}$, suppose that $\phi$ is post-critically finite. Equivalently, $f(s)=s^2+m$ must be post-critically finite. However, if $f$ is post-critically finite over $\mathbb{Q}$, then $m$ belongs to the Mandelbrot set $\mathcal{M}$ over the complex numbers; see \cite[\S4.24]{Silv-Dyn}. In particular, \cite[Proposition 4.19]{Silv-Dyn} implies that $|m|\leq2$, where $|\cdot|$ denotes the complex absolute value. Hence the absolute logarithmic height of $m$ is at most $\log(2)$. One checks that this implies that $m\in\{0,-1,-2\}$.                
\end{proof} 
\end{proposition}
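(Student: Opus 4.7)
The plan is to verify both stability of $\phi$ and maximality of every layer $K_n/K_{n-1}$, combining \cite[Proposition 4.2]{Jones2} with the isotrivial machinery developed in the proof of Theorem \ref{FIC} part $3$. First I would record that, setting $f(s):=s^2+m\in k[s]$, an easy induction gives $\phi^n(\gamma)=t^d+f^n(0)$; this is also transparent from the conjugation $x\mapsto x-t^d$, which shows $\phi$ is isotrivially equivalent to $s^2+m$. The assumption that $\mathcal{O}_\phi(\gamma)$ is infinite then translates into the statement that the values $\{f^n(0)\}_{n\geq 0}\subset k$ are pairwise distinct, and in particular nonzero for $n\geq 1$.

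For stability, each $\phi^n(\gamma)=t^d+f^n(0)$ with $f^n(0)\neq 0$ is separable (hence squarefree) as a polynomial in $t$, and therefore not a perfect square since $d\geq 1$; the same applies to $-\phi(\gamma)=-(t^d+m)$. So \cite[Proposition 4.2]{Jones2} yields stability. For maximality, suppose some $K_n/K_{n-1}$ fails to be maximal: Lemma \ref{Stoll} gives $\phi^n(\gamma)=d_n\cdot y_n^2$ with $d_n\in R$ squarefree or a unit. The refinement (\ref{refinement}) forces every prime $p\mid d_n$ to divide some $\phi^m(\gamma)$ with $m<n$; but $\phi^n(\gamma)-\phi^m(\gamma)=f^n(0)-f^m(0)\in k^*$, so distinct iterates are coprime, forcing $p\in k^*$. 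Hence $d_n\in k^*$, and $\phi^n(\gamma)$ is a constant times a square in $k[t]$. Squarefreeness of $\phi^n(\gamma)$ of degree $d\geq 1$ now forces $y_n$ to be constant, whence $\phi^n(\gamma)\in k$, contradicting $d\geq 1$. Thus every $K_n/K_{n-1}$ is maximal and $G_\infty(\phi)\cong\Aut(T_\infty(\phi))$.

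For the specialization to $k=\mathbb{Q}$ with $m\notin\{0,-1,-2\}$, I would reduce to showing $\mathcal{O}_\phi(\gamma)$ is infinite, equivalently that $f(s)=s^2+m$ is post-critically infinite over $\mathbb{Q}$. If $f$ were PCF, \cite[Proposition 4.19]{Silv-Dyn} places $m$ in the Mandelbrot set, so $|m|\leq 2$ at the complex place. A denominator argument (if $m=p/q$ in lowest terms with $q>1$, then $q^{2^n}$ appears in the denominator of $f^n(0)$) then forces $m\in\mathbb{Z}\cap[-2,2]$, and a hand check of the five integer candidates leaves precisely $\{0,-1,-2\}$ as the PCF values.

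The subtlety compared to Theorem \ref{FIC} part $3$ is that here the coprimality argument kills $d_n$ outright rather than merely bounding the number of bad $n$. This extra rigidity comes from the special shape $\phi^n(\gamma)=t^d+c_n$: once refinement has forced $d_n\in k^*$, squarefreeness rules out any nontrivial $y_n$. In particular, no height bounds from \cite{Mason,Schmidt} are needed, and the argument becomes essentially elementary once one unwinds the relationship between $\phi$ and $f$.
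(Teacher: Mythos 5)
Your proposal is correct and follows essentially the same route as the paper: stability via squarefreeness of $t^d+f^n(0)$ and \cite[Proposition 4.2]{Jones2}, maximality of every $K_n/K_{n-1}$ via the coprimality of distinct $\phi^n(\gamma)$'s together with (\ref{refinement}) to kill $d_n$, and the Mandelbrot-set bound to classify the PCF parameters over $\mathbb{Q}$. The only cosmetic difference is that in the $k=\mathbb{Q}$ step you use an explicit denominator argument to force $m\in\mathbb{Z}$, whereas the paper invokes the absolute logarithmic height bound $h(m)\leq\log 2$ and leaves the finite check implicit; both land on $\{0,-1,-2\}$.
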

One would like to weaken the stability hypothesis of Theorem \ref{FIC} to that of \emph{eventual stability}, meaning that the number of irreducible factors of $\phi^n$ is bounded independently of $n$. This condition is known to hold in many settings; see \cite[Theorem 5]{Eventually} and \cite[Corollary 3]{Ingram}. In particular, it may be the case that if $\phi$ is post-critically infinite and $0$ does not lie in a periodic orbit of $\phi$ (weaker than stability), then the image of $G_\infty(\phi)$ inside $\Aut(T_\infty(\phi))$ is a finite index subgroup.        

Similarly, it is tempting to think that the same results hold over all function fields in characteristic zero. This is to be expected, especially since the main ingredients, Stoll's lemma and explicit height bounds, go through without any problems (the bounds just depend on the genus). 

However, the difficulty arises when attempting to formulate the non-maximality of $K_n/K_{n-1}$ in terms of the rational points on curves. In the proof of Theorem \ref{FIC}, we wrote $\phi^n(\gamma)=d_n\cdot y_n^2$ for some elements  $d_n,\,y_n\in K$. This is not possible in general. For instance, if $K=k(C)$ and $C$ has genus $1$, then this relationship says that the square-free part or the divisor of $\phi^n(\gamma)$ is the divisor of a function, hence has degree zero and is comprised of dependent points on the elliptic curve. But we need this to hold for all $n$ and all stable $\phi$ to get uniform bounds, which won't be the case. 

However, for a particular $C$, it would be interesting to explicitly describe the set of all quadratic  polynomials $\phi$ for which a uniform bound on $|\Aut(T_\infty(\phi)):G_\infty(\infty)|$ can be obtained.    
\\
   
 \textbf{Acknowledgements:} It is a pleasure to thank Richard Pink for his comments on this manuscript and for pointing out how to improve the index bound for isotrivial polynomials given in part $3$ of Theorem \ref{FIC}. I also thank Joe Silverman, Tom Tucker, and Rafe Jones for the many useful discussions related to this work. Finally, I would like to thank the Arizona Winter School for motivating various number theoretic questions over function fields.

\end{document}